
\def\LaTeX{%
  \let\Begin\begin
  \let\End\end
  \let\salta\relax
  \let\finqui\relax
  \let\futuro\relax}

\def\UK{\def\our{our}\let\sz s}
\def\USA{\def\our{or}\let\sz z}

\UK



\LaTeX

\USA


\salta

\documentclass[twoside,12pt]{article}
\setlength{\textheight}{24cm}
\setlength{\textwidth}{16cm}
\setlength{\oddsidemargin}{2mm}
\setlength{\evensidemargin}{2mm}
\setlength{\topmargin}{-15mm}
\parskip2mm


\usepackage[usenames,dvipsnames]{color}
\usepackage{amsmath}
\usepackage{amsthm}
\usepackage{amssymb,bbm}
\usepackage[mathcal]{euscript}
\usepackage{lmodern}
\usepackage{cite}
\usepackage{hyperref}
%
%
\usepackage[shortlabels]{enumitem}

\usepackage[ulem=normalem,draft]{changes}
%
%

%
 
\definecolor{viola}{rgb}{0.3,0,0.7}
\definecolor{ciclamino}{rgb}{0.5,0,0.5}
\definecolor{blu}{rgb}{0,0,0.7}
\definecolor{rosso}{rgb}{0.85,0,0}

\def\juerg #1{{\color{green}#1}}
\def\pier #1{{\color{red}#1}} 
\def\betti #1{{\color{blue}#1}}

\def\juerg #1{#1}
\def\pier #1{{#1}}
\def\betti #1{{#1}}




\bibliographystyle{plain}


%
\newtheorem{theorem}{Theorem}[section]

\finqui

\def\Beq{\Begin{equation}}
\def\Eeq{\End{equation}}

\def\Bthm{\Begin{theorem}}
\def\Ethm{\End{theorem}}
\def\Blem{\Begin{lemma}}
\def\Elem{\End{lemma}}
\def\Bprop{\Begin{proposition}}
\def\Eprop{\End{proposition}}
\def\Bcor{\Begin{corollary}}
\def\Ecor{\End{corollary}}
\def\Brem{\Begin{remark}\rm}
\def\Erem{\End{remark}}

\def\Bdim{\Begin{proof}}
\def\Edim{\End{proof}}
\def\Bcenter{\Begin{center}}
\def\Ecenter{\End{center}}
\let\non\nonumber




\def\step #1 \par{\medskip\noindent{\bf #1.}\quad}
\def\jstep #1: \par {\vspace{2mm}\noindent\underline{\sc #1 :}\par\nobreak\vspace{1mm}\noindent}

\def\Lip{Lip\-schitz}

\def\lhs{left-hand side}
\def\rhs{right-hand side}



\def\multibold #1{\def\arg{#1}%
  \ifx\arg\pto \let\next\relax
  \else
  \def\next{\expandafter
    \def\csname #1#1#1\endcsname{{\bf #1}}%
    \multibold}%
  \fi \next}

\def\pto{.}

\def\multical #1{\def\arg{#1}%
  \ifx\arg\pto \let\next\relax
  \else
  \def\next{\expandafter
    \def\csname cal#1\endcsname{{\cal #1}}%
    \multical}%
  \fi \next}


\def\multimathop #1 {\def\arg{#1}%
  \ifx\arg\pto \let\next\relax
  \else
  \def\next{\expandafter
    \def\csname #1\endcsname{\mathop{\rm #1}\nolimits}%
    \multimathop}%
  \fi \next}

\multibold
qwertyuiopasdfghjklzxcvbnmQWERTYUIOPASDFGHJKLZXCVBNM.

\multical
QWERTYUIOPASDFGHJKLZXCVBNM.

\multimathop
diag dist div dom mean meas sign supp .


\def\Accorpa #1#2 #3 {\gdef #1{\eqref{#2}--\eqref{#3}}%
  \wlog{}\wlog{\string #1 -> #2 - #3}\wlog{}}


\def\separa{\noalign{\allowbreak}}

\def\<#1>{\mathopen\langle #1\mathclose\rangle}
\def\norma #1{\mathopen \| #1\mathclose \|}

\def\I2 #1{\int_{Q_t}|{#1}|^2}
\def\IT2 #1{\int_{Q_t^T}|{#1}|^2}
\def\IO2 #1{\norma{{#1(t)}}^2}

\def\next{\\ & \quad}

\def\intQ{\int_Q}
\def\iO{\int_\Omega}

\def\Qtt{\int_{Q_t^T}}

\def\dtt{\partial_{tt}}
\def\dt{\partial_t}
\def\dn{\partial_{\bf n}}
\def\S{{\cal S}}

\def\X{{\cal X}}

\def\Uad{{\cal U}_{\rm ad}}
\def\G{{\cal G}}

\def\checkmmode #1{\relax\ifmmode\hbox{#1}\else{#1}\fi}


\def\erre{{\mathbb{R}}}
\def\rz{{\mathbb{R}}}

\def\enne{{\mathbb{N}}}

\def\J{{\cal J}}

\def\ph{\varphi}




\def\genspazio #1#2#3#4#5{#1^{#2}(#5,#4;#3)}
\def\spazio #1#2#3{\genspazio {#1}{#2}{#3}T0}

\def\L {\spazio L}
\def\H {\spazio H}
\def\W {\spazio W}

\def\C #1#2{C^{#1}([0,T];#2)}



\def\Lx #1{L^{#1}(\Omega)}
\def\Hx #1{H^{#1}(\Omega)}

\def\Ldue{\Lx 2}

\def\Huno{\Hx 1}
\def\Hdue{\Hx 2}

\def\Liq{{L^\infty(Q)}}
\def\Lio{{L^\infty(\Omega)}}
\def\Lzq{{L^2(Q)}}



\let\vp\varphi
\def\h{\mathbbm{h}}


\def\us{u^*}
\def\mus{\mu^*}
\def\phis{\vp^*}
\def\sis{\sigma^*} 
\def\xis{\xi^*}

\def\ual{u_\alpha}
\def\mual{\mu_\alpha}
\def\phial{\vp_\alpha}
\def\sial{\sigma_\alpha}

\def\mualn{\mu_{\alpha_n}}
\def\phialn{\vp_{\alpha_n}}

\def\pal{p_\alpha}
\def\qal{q_\alpha}
\def\ral{r_\alpha}
\def\paln{p_{\alpha_n}}
\def\qaln{q_{\alpha_n}}
\def\raln{r_{\alpha_n}}
\def\usal{u^*_\alpha}

\let\hat\widehat

\def\uad{{\cal U}_{\rm ad}}

\usepackage{amsmath}
\DeclareFontFamily{U}{mathc}{}
\DeclareFontShape{U}{mathc}{m}{it}%
{<->s*[1.03] mathc10}{}

\DeclareMathAlphabet{\mathscr}{U}{mathc}{m}{it}
\Begin{document}


%
\title{Asymptotic analysis of an optimal control problem for a phase field tumor growth model with 
hyperbolic relaxation of the chemical potential}

\author{}
\date{}
\maketitle
\Bcenter
\vskip-1.5cm
{\large\sc Pierluigi Colli$^{(1)}$}\\
{\normalsize e-mail: {\tt pierluigi.colli@unipv.it}}\\[0.25cm]
{\large\sc Elisabetta Rocca$^{(1)}$}\\
{\normalsize e-mail: {\tt elisabetta.rocca@unipv.it}}\\[0.25cm]
{\large\sc J\"urgen Sprekels$^{(2)}$}\\
{\normalsize e-mail: {\tt juergen.sprekels@wias-berlin.de}}\\[.5cm]
$^{(1)}$
{{\small Dipartimento di Matematica ``F. Casorati''}\\
{\small Universit\`a di Pavia}\\
{\small via Ferrata 5, I-27100 Pavia, Italy}\\[.3cm] 
$^{(2)}$
{\small Weierstrass Institute for Applied Analysis and Stochastics}\\
{\small Anton-Wilhelm-Amo-Strasse 39, D-10117 Berlin, Germany}\\[10mm]}
\Ecenter

\Begin{abstract}
\noindent 
In this paper, we study the optimal control of a phase field tumor growth model of Cahn--Hilliard type with possibly singular potential in which  the often assumed parabolic relaxation of the chemical potential is replaced  by a hyperbolic one.  We investigate the asymptotic behavior 
of the control problem as the relaxation parameter $\,\alpha\,$ approaches zero. It is shown that the corresponding state
and adjoint state variables of the relaxed problem converge in a suitable sense to their 
counterparts for the unrelaxed regime where $\alpha=0$.  Also, weak limits of optimal controls for the relaxed problem can be shown to be optimal controls for the unrelaxed case. Moreover, every fixed 
optimal control $\,\us\,$ for the unrelaxed system turns out to be the limit of a family 
$\{u^*_\alpha\}_{\alpha>0}$ of minimizers 
of suitable  optimal control problems for the relaxed systems, where the original cost functional
is replaced by the so-called ``adapted'' cost functional.  It is also shown that the 
variational inequalities satisfied by $\usal$, which constitute
the first-order necessary optimality conditions of the
relaxed problems with adapted cost functional, converge to the corresponding variational 
inequality fulfilled by $\us$ for the unrelaxed problem with the original (nonadapted) cost functional.

\vskip3mm
\noindent {\bf Key words:}
Tumor growth models, singular potentials, hyperbolic relaxation, optimal control, asymptotic behavior

\vskip3mm
\noindent {\bf AMS (MOS) Subject Classification:} {49J20, 49K20, 49K40, 35K57, 37N25}
\End{abstract}

\salta
\pagestyle{myheadings}
\newcommand\testopari{\sc Colli -- Rocca -- Sprekels}
\newcommand\testodispari{\sc Asymptotics in the control of a relaxed tumor growth model}
\markboth{\testopari}{\testodispari}
\finqui
%


\section{Introduction}
\label{INTRO}
\setcounter{equation}{0}

Let  $\Omega\subset\erre^3$ denote some open and bounded domain having a smooth boundary $\Gamma=\partial\Omega$ with outward normal $\,{\bf n}\,$ and \pier{the} corresponding outward normal derivative $\dn$.  Moreover, we fix some final time $T>0$ and
introduce for every $t\in (0,T]$ the sets $Q_t:=\Omega\times (0,t)$ 
 and $\Sigma_t:=\Gamma\times (0,t)$,
 where we put, for the sake of brevity, $Q:=Q_T$ and $\Sigma:=\Sigma_T$.
We then consider for parameters $\,\alpha\ge 0$\, the following optimal control problem:

\vspace{3mm}\noindent
{\bf (CP$_\alpha$)} \quad Minimize the {\it cost functional}
\begin{align} 
\non	
\J(\vp,u)
:=&\,  \frac{b_1}2 \intQ |\vp-\widehat \vp_Q|^2
 + \frac{b_2}2 \iO |\vp(T)-\widehat\vp_\Omega|^2
+ \frac{b_3}2 \intQ |u|^2 
 \,+\,\kappa\,\G(u)
 \\[1mm]
 \label{cost} 
  =:& \,J(\vp,u) + \kappa \G(u)
\end{align}
subject to the state system
\begin{align}
\label{ss1}
&\alpha\dtt\mu+\dt\ph-\Delta\mu=P(\ph)(\sigma+\chi(1-\ph)-\mu) - \h(\ph)z
 \quad&&\mbox{in }\,Q\,,\\[1mm]
\label{ss2}
&\tau\dt\vp-\Delta\vp+\xi+F_2'(\vp)=\mu+\chi\,\sigma,
\quad \xi\in\partial F_1(\vp), \quad&&\mbox{in }\,Q\,,\\[1mm]
\label{ss3}
&\dt\sigma-\Delta\sigma=-\chi\Delta\vp-P(\ph)(\sigma+\chi(1-\ph)-\mu)+u\quad&&\mbox{in }\,Q\,,\\[1mm]
\label{ss4}
&\dn \mu=\dn\vp=\dn\sigma=0 \quad&&\mbox{on }\,\Sigma\,,\\[1mm] 
\label{ss5}
&\mu(0)=\mu^0,\quad \dt\mu(0)=\mu^1,\quad\vp(0)=\vp^0,\quad \sigma(0)=\sigma^0\,,\quad &&\mbox{in }\,\Omega\,,
\end{align}
\Accorpa\Statesys {ss1} {ss5}
and to the control constraint 
\begin{equation}
\label{uad}
u\in\uad:=\{v\in\Liq: \underline u\le v\le\overline u\,\,\,\mbox{a.e. in }\,Q\},
\end{equation}
where the threshold functions satisfy
\Beq
\label{thresholds}
\underline u,\overline u\in\Liq \,\,\,\mbox{and}\,\,\, \underline u\le\overline u\,\,\,
\mbox{a.e. in }\,Q.
\Eeq
We assume that $b_1\ge 0$, $b_2\ge 0$, $b_3>0$ and $\kappa\ge 0$ are prescribed
constants, $\widehat \vp_Q\in \Lzq$ and $\widehat\vp_\Omega\in \Huno$ are given 
target functions, and $\G:L^2(Q)\to\erre$ is a nonnegative, continuous and convex 
functional. A prototypical form of $\G$, which enhances the occurrence of
{\em sparsity}, is given by
\Beq
\label{defG}
\G(u)=\int_Q |u|=\|u\|_{L^1(Q)}.
\Eeq

The  state system \Statesys\ constitutes a simplified and relaxed version of the four-species thermodynamically consistent model for tumor growth
originally proposed by Hawkins-Daruud et al.\ in \cite{HZO} that additionally includes 
\juerg{chemotactic} terms. Notice  that  the first two initial conditions in \eqref{ss5} are 
 not needed if $\alpha=0$; in 
the following, we always tacitly assume they are present only if $\alpha>0$.

Let us briefly review the role of the symbols occurring in \Statesys. The primary variables $\mu,\ph, \sigma$ denote the phase field, the associated chemical potential, and the nutrient concentration, respectively.
Furthermore, we stress that the additional term $\alpha\dtt\mu$ 
is a hyperbolic \pier{relaxation} of equation \eqref{ss1},
whereas the term $\tau\dt\ph$ is the viscosity contribution to the Cahn--Hilliard equation. \betti{Notice that the parameter $\tau$ 
is strictly positive in the present contribution, while the aim is to let the hyperbolic relaxation parameter $\alpha$ tend  to zero.} 
The nonlinearity $P$ denotes a proliferation function, whereas the positive constant $\chi$ represents the chemotactic sensitivity.
 
The key idea behind the regularizations originates from the fact that 
their presence allows us to take into account more general potentials $F=F_1+F_2$
whose derivatives, which play the role of a thermodynamic driving force in the model, may be singular. Moreover, as a common feature of phase field models, $F$ is a nonlinearity that is assumed 
to possess a double-well shape. We consider here only potentials of {\em regular} and 
{\em logarithmic} type, whose prototypes are given, in this order, by
\begin{align}
\label{regpot}
&F_{\rm reg}(r)=\frac 14 \left(1-r^2\right)^2 \quad\mbox{for $\,r\in\rz$,}\\[1mm]
\label{logpot}
&F_{\rm log}(r)=\left\{\begin{array}{ll}
(1+r)\,\ln(1+r)+(1-r)\,\ln(1-r)-k_1 r^2\quad&\mbox{for $\,r\in (-1,1)$}\\
2\ln(2)-k_1\quad&\mbox{for $\,r\in\{-1,1\}$\,,}\\
+\infty\quad&\mbox{for $\,r\not\in[-1,1]$}
\end{array}\right.
\end{align}
where $k_1>1$ so that $F_{\rm log}$ is nonconvex. 
Observe that $F_{\rm log}$ is very relevant in the 
applications, where $F'_{\rm log}(r)$ becomes unbounded as $r\to\pm 1$.

In the above model equations, there are two functions that may 
serve as distributed controls acting in the phase and nutrient equations,
respectively. The  variable $z$, which is nonlinearly coupled to the state variable $\varphi$ in the phase equation \eqref{ss1}, models the application of a cytotoxic drug into 
the system; it is multiplied by a truncation function $\h$ in order to have the action
only in the spatial region where the tumor cells are located. We refer to \cite{GLSS, GARL_1, HKNZ, KL} for  possible choices of $\h$.
On the other hand, the control $u$ appearing in \eqref{ss3} can model  the supply of
either an external medication or some nutrient.

In this paper, we only consider $u$ as a control while $z$ is assumed fixed. The 
reason for that is purely technical: on the one hand, we intend to include
the logarithmic potential in our analysis. This makes it necessary to prove a so-called
{\em uniform separation condition} for the phase variable $\ph$, which is known
to be available (cf. 
\cite[Thm.~3.1 and Rem.~3.2]{CRS1})  only if $z\in L^2(0,T;\Huno)$. On the other hand, an analysis of the asymptotic behavior as $\alpha\searrow0$ seems only to be possible
(see \cite[Sect.~4]{CRS1}) if $z\in H^1(0,T;\Ldue)$ and $P$ is a positive constant function,
 in addition. However, from the 
viewpoint of optimal control it is very disadvantageous to postulate that the
admissible controls belong to $H^1(0,T;\Ldue)\cap L^2(0,T;\Huno)$. We have
therefore chosen to consider $z$ as a fixed datum and, moreover, to assume that $P$ is a
positive constant.  

\pier{\juerg{We now} provide an overview of the related literature. Diffuse interface models of Cahn--Hilliard type \juerg{form} a well-established framework for the description of tumor growth phenomena, \juerg{since they can both} capture the evolution of complex interfaces and the interaction between different species such as tumor cells, nutrients, and chemical agents. Starting from the multiphase formulation \juerg{proposed} by Hawkins-Daruud et al.~\cite{HZO}, various extensions and analytical studies of such \juerg{models} have been carried \juerg{out}. \juerg{The} contributions \cite{GLSS, GARL_1, GARLR,HKNZ, KL, S_b, SigTime} \juerg{discuss} different modeling aspects \juerg{such as} the choice of nonlinear couplings, chemotactic mechanisms, and the role of additional source terms. \juerg{In particular, physically relevant choices of the truncation function $\h$ and the proliferation function $P$ are given}.}

\pier{From the analytical viewpoint, Cahn--Hilliard systems with possibly singular free energies have been intensively studied (see, in particular\betti{\cite{AS, CGSS, CS, EG, FGR, FLRS, GHW, LRS, RSS}}). \juerg{Potentials of logarithmic type (such as \eqref{logpot}) are} particularly relevant from the modeling perspective, but \juerg{entail} mathematical difficulties due to the blow-up of the derivative at the boundary of the physical interval. A key tool in this context is the so-called uniform separation property, which guarantees that the phase variable remains strictly inside the domain of definition of the potential. For results that are related to the present setting, including well-posedness, regularity, and separation properties, we refer in particular to \cite{FGR, CGH, FLRS, GARL_1, CRS1}.}

\pier{Optimal control problems for phase field systems of Cahn--Hilliard type have also been widely investigated in recent years\juerg{. In this respect, we refer} to~\cite{CGS24, CS-control, CS-nonvisc,CST} for viscous or nonviscous Cahn--Hilliard systems, \cite{ACGW, FLRS, GLSS, SW} for Cahn--Hilliard--Darcy systems, \cite{AS,CGSS,GSS} for Cahn--Hilliard--Keller--Segel systems, \betti{\cite{CRW, CGRS3, CSS1, CSS2, CSSJOTA,EK_ADV,S_DQ,S_a, GHW, LRS, RSS, ST, ST24,You}} for tumor growth models, \cite{DRST, Funo, Fdue} for nonlocal models. Different control mechanisms have been considered, including distributed controls acting in the bulk \juerg{and} boundary controls, as well as cost functionals involving tracking-type terms and sparsity-promoting contributions. However, most of the available literature focuses on parabolic models. In contrast, the incorporation of relaxation effects, and in particular hyperbolic relaxation terms, has received comparatively little attention in the context of tumor growth models and their optimal control. The introduction of a hyperbolic relaxation of the chemical potential, as in equation~\eqref{ss1}, is motivated by the desire to account for inertial effects and to provide a more accurate description of transient dynamics (see \cite{CS, CS-control, CRS1}). At the same time, \juerg{the presence of such a relaxation term is  interesting from the viewpoint of mathematical analysis, since} it yields additional time regularity for the chemical potential.}

\pier{\juerg{It is therefore natural to investigate the asymptotics of the system as  the relaxation parameter $\alpha$ approaches zero.} For the underlying state system, this problem has been investigated in \cite{CRS1, CRS2}, where convergence results have been established under suitable assumptions on the data. Nevertheless, the corresponding asymptotic analysis for optimal control problems is much more delicate, since it involves not only the convergence of the state variables, but also the behavior of optimal controls and adjoint variables, as well as the passage to the limit in the associated optimality conditions. The main goal of the present paper is to address these issues. We study an optimal control problem for a tumor growth model with hyperbolic relaxation of the chemical potential and investigate the asymptotic behavior as \juerg{$\alpha \searrow 0$}. In particular, we prove that optimal controls for the relaxed problems converge (in a suitable sense) to optimal controls for the limit problem, and we show that every optimal control of the limit problem can be approximated by a sequence of optimal controls of suitably adapted problems. Moreover, we analyze the convergence of the associated variational inequalities, which represent the first-order necessary optimality conditions.}

\pier{The paper is organized as follows. In Section~\ref{STATE}, we introduce the functional setting, state the assumptions, and recall well-posedness results for the state system. We also derive uniform estimates and discuss the convergence of solutions as $\alpha \searrow 0$. In Section~\ref{OPT}, we investigate the optimal control problems, establishing existence results and studying the asymptotic behavior of optimal controls. In particular, we introduce adapted cost functionals in order to obtain strong convergence results and to approximate arbitrary optimal controls of the limit problem. Sections~\ref{ADJ} and~\ref{CONV} are devoted to the derivation of first-order necessary optimality conditions for the relaxed problems and to the analysis of their convergence to the corresponding conditions for the limit problem.}

Throughout the paper, we \pier{repeatedly use} H\"older's inequality, the elementary Young's inequality
\begin{equation}
\label{Young}
a b\,\le \delta |a|^2+\frac 1{4\delta}|b|^2\quad\forall\,a,b\in\erre, \quad\forall\,\delta>0,
\end{equation}
as well as the continuity of the embeddings $H^1(\Omega)\subset L^p(\Omega)$ for $1\le p\le 6$ and 
$\Hdue\subset C^0(\overline\Omega)$. Notice that the latter embedding is also compact, while this holds true
for the former embeddings only if $p<6$. 

Finally, let us introduce a convention that will be tacitly employed throughout the paper:
the symbol  $C$ is used to indicate every constant
that depends only on the structural data of the problem (such as
$T$, $\Omega$, $\chi$,  $\tau$, the shape of the 
nonlinearities, and the norms of the involved functions),
so that its meaning may change from line to line and even within formulas.
If a parameter $\delta$ enters the computation, then
the symbol $C_\delta$ denotes constants
that additionally depend on~$\delta$.
On the contrary, precise constants that we refer to
will be denoted in a different way.

\section{General setting and convergence of the states}
\label{STATE}
\setcounter{equation}{0}

In this section, we introduce the general setting of our 
problem, state a well-posedness results for the state system \eqref{ss1}--\eqref{ss5}, and
discuss the convergence properties of the state variables as $\alpha\searrow0$. 
To begin with, for a Banach space $\,X\,$ we denote by $\|\cdot\|_X$
the norm in the space $X$ or in a power thereof, and by $\,X^*\,$ its dual space. 
The only exception from this rule applies to the norms of the
$\,L^p\,$ spaces and of their powers, which we often denote by $\|\cdot\|_p$, for
$\,1\le p\le +\infty$. As usual, for Banach spaces $\,X\,$ and $\,Y\,$ that are contained in the same topological vector space, we introduce the linear space
$\,X\cap Y\,$ which becomes a Banach space when endowed with its natural norm $\,\|u\|_{X \cap Y}:=
\|u\|_X\,+\,\|u\|_Y$, for $\,u\in X\cap Y$.
Moreover, we introduce the spaces
\begin{align}
  & H := \Ldue \,, \quad  
  V := \Huno\,,   \quad
  W := \{v\in\Hdue: \ \dn v=0 \,\mbox{ on $\,\Gamma$}\}.
  \label{defHVW}
\end{align}
Furthermore, by $\,(\,\cdot\,,\,\cdot\,)$, $\,\Vert\,\cdot\,\Vert$, and $\<\cdot,\cdot>$, we denote the standard inner product 
and related norm in $\,H$, as well as the dual product between $V$ and its dual $V^*$. We then have the dense and compact embeddings $\,V\subset H\subset  V^*$, with the standard identification  \,\,$\langle v,w \rangle =(v,w)\,\,$ for
 $v\in H$ and $w\in V$.  
 
 We now provide precise assumptions for the data of the system \Statesys. 
\begin{enumerate}[label={\bf (A\arabic{*})}, ref={\bf (A\arabic{*})}]
\item \label{const:weak}
	$\tau $ and $\chi$ are positive constants.
\item \label{F:weak}
	$F=F_1+F_2$ satisfies: $\,F_2\in C^3(\erre)\,$ has a Lipschitz continuous derivative $\,F_2^{\,\prime}$; 
$\,F_1:\erre\to [0,+\infty]\,$ is convex and lower semicontinuous with $\,F_1(0)=0$, and there are constants $\,-\infty\le r_-<0<r_+\le +\infty\,$ such that the restriction of $F_1$ to $(r_-,r_+)$ belongs to $C^3(r_-,r_+)$  and satisfies   
\Beq
\label{singular}
\lim_{r\searrow r_-} F_1^{\,\prime}(r)=-\infty \qquad\mbox{and}\qquad \lim_{r\nearrow r_+} F_1^{\,\prime}(r)=+\infty\,.
\Eeq
\item \label{P:weak}
	$P\equiv P_0>0$ is a constant function, and $z\in H^1(0,T;H)
	\cap L^2(0,T;V)\cap\Liq$.
\item \label{h:weak}
	$\h \in C^2(\erre)$ is nonnegative, bounded, and $\h$ and $\h'$ are \Lip\ continuous.
\end{enumerate}
Let us note that both the potentials  \eqref{regpot} and \eqref{logpot} fulfill the condition 
$\ref{F:weak}$, where in the latter case we have $(r_-,r_+)=(-1,1)$. We also observe that the
subdifferential $\partial F_1$ of the convex function $F_1$ is a maximal monotone graph
in $\erre\times\erre$ satisfying $0\in \partial F_1(0)$. Its effective domain is by virtue of the blow-up condition \eqref{singular} given by  $D(\partial F_1)=(r_-,r_+)$, and it holds
 $\partial F_1(r)=\{F_1'(r)\}$  for every 
$r\in(r_-,r_+)$.  Moreover, the subdifferential operators induced by $F_1$ 
in $L^2$ spaces, which are again denoted by $\partial F_1$, are maximal
monotone in these spaces.

By a solution to  the state system \Statesys\  we mean a 
triple $(\mu,\ph,\sigma)$ satisfying 
\begin{align}
\label{mutglatt}
&\alpha\mu\in H^2(0,T;V^*)\cap W^{1,\infty}(0,T;H),\\
\label{muglatt}
&\mu\in L^\infty(0,T;V),\\
\label{phiglatt}
&\ph\in W^{1,\infty}(0,T;H)\cap H^1(0,T;V)\cap L^\infty(0,T;W)\cap C^0(\overline Q),\\
\label{sigmaglatt}
&\sigma\in H^1(0,T;H)\cap C^0([0,T];V) \cap L^2(0,T;W),\\
\label{xiglatt}
&\xi\in L^\infty(0,T;H),
\end{align}
that solves the following system:
\begin{align}
 \label{var:1}
 & \<\alpha\dtt\mu , v > + \iO\dt\ph \, v 
	+ \iO \nabla \mu \cdot \nabla v
	= \iO P_0(\sigma+\chi(1-\ph)-\mu)v
	-\iO \h(\ph)z v
	\nonumber\\
	& \qquad \hbox{for every $v \in V $ and a.e. in $(0,T)$,}\\[2mm]
\separa	
\label{var:2}
	&\tau\dt\vp-\Delta\vp+\xi+F_2'(\vp)=
	\mu+\chi\,\sigma, \quad \xi\in\partial F_1(\ph), \quad \hbox{a.e. in $\,Q$,}
 	\\
 \separa
\label{var:3}
 	& \dt\sigma -\Delta\sigma
 	=-\chi\Delta\ph 
 	-  P_0(\sigma+\chi(1-\ph)-\mu)
 	+ u	 \, \,\,\mbox{ a.e. in \,$Q$,}\\
\label{var:4}
	&\mu(0)=\mu^0, \quad
\dt\mu(0)=\mu^1,\quad
	\ph(0)=\ph^0, \quad
	\sigma(0)=\sigma^0, \quad \hbox{a.e. in $\,\Omega$}.
\end{align}
Again, it is understood that the first two initial conditions in \eqref{var:4}  are not
present if $\alpha=0$. Observe also that the homogeneous Neumann boundary 
condition for $\mu,\ph,\sigma$ are encoded in \eqref{var:1} and the
regularity requirements
\eqref{phiglatt}, \eqref{sigmaglatt}, respectively. The following 
well-posedness result for positive $\alpha$ is a \pier{consequence of \cite[Theorems~2.2 and~3.1]{CRS1}.}
\Bthm
Let \ref{const:weak}--\ref{h:weak} 
be fulfilled, and assume that 
\begin{align}
\label{initialreg}
&\mu^0\in W,\quad \mu^1\in V,\quad \ph^0\in W,\quad \sigma^0\in \pier{V\cap \Lio},\\
\label{initialsep}
&r_-<\ph^0(x)<r_+\quad\mbox{for all \,$x\in\overline\Omega$}. 
\end{align}
Then the state system \Statesys\ has for every $u\in L^\infty(0,T;H)$ and every 
$\alpha\in (0,1]$ a unique solution
$(\mual,\phial,\sial)$ such that
\begin{align}
\label{regmu}
&\mual\in W^{2,\infty}(0,T;V^*)\cap W^{1,\infty}(0,T;H)\cap H^1(0,T;V)\cap L^\infty(0,T;W)
\cap C^0(\overline Q),\\
\label{regphi}
&\phial\in  W^{1,\infty}(0,T;H)\cap H^1(0,T;V)\cap L^\infty(0,T;W)\cap C^0(\overline Q),\\
\label{regsigma}
&\sial \in H^1(0,T;H)\cap C^0([0,T];V)\cap L^\infty(0,T;W)\cap \Liq,\\
\label{regf1}
&F_1'(\phial)\in\Liq.
\end{align}
Moreover, there are constants $K_1^\alpha>0$ and $r_-<
\bar\rho_\alpha<\hat\rho_\alpha<r_*$, which depend only 
on the data of the state system and the norm $\,\|u\|_{L^\infty(0,T;H)}$, such that
\begin{align}
\label{ssbound1}
&\|\mual\|_{ W^{2,\infty}(0,T;V^*)\cap W^{1,\infty}(0,T;H)\cap H^1(0,T;V)
\cap L^\infty(0,T;W)
\cap C^0(\overline Q)} \nonumber\\
&+\|\phial\|_{W^{1,\infty}(0,T;H)\cap H^1(0,T;V)\cap L^\infty(0,T;W)\cap C^0(\overline Q)}
\nonumber\\
&+\|\sial\|_{H^1(0,T;H)\cap C^0([0,T];V)\cap L^\infty(0,T;W)\cap \Liq}\,\le\,K_1^\alpha\,,
\\[2mm]
\label{separaal}
&\bar\rho_\alpha \le \phial(x,t) \le \hat\rho_\alpha\quad\mbox{for all }\,(x,t)\in \overline Q\,.
\end{align} 
\Ethm
\Brem
From Theorem 2.1 and the boundedness of $\uad$ in $L^\infty(0,T;H)$ it follows that
the same constants $K_1^\alpha>0$  and $\bar\rho_\alpha,\hat\rho_\alpha$ can be chosen for all admissible 
controls $u\in\uad$.  In particular, it follows that for every fixed $\alpha\in (0,1]$ the uniform separation property for the phase variable
is satisfied.  This implies that $\partial F_1(\phial)=\{F_1'(\phial)\}$ a.e. in $Q$ and that the phase equation \eqref{var:2} is fulfilled in the strong form
\Beq
\label{var:2neu}
\tau\dt\phial-\Delta\phial+F'(\phial)=\mual+\chi\sial \quad\mbox{a.e. in }\,Q\,.
\Eeq
Moreover, the control-to-state
operator
\Beq
\label{defSal}
\S_\alpha: L^\infty(0,T;H)\ni u \mapsto \S_\alpha(u)=(\S_\alpha^1(u),\S_\alpha^2(u),
\S_\alpha^3(u)):=(\mual,\phial,\sial)
\Eeq
is well defined as a mapping from $L^\infty(0,T;H)$ into the solution space
\begin{align}
\label{deX}
\X\,:=\,&\bigl(W^{2,\infty}(0,T;V^*)\cap W^{1,\infty}(0,T;H)\cap H^1(0,T;V)
\cap L^\infty(0,T;W)
\cap C^0(\overline Q) \bigr) \nonumber\\
&\times\bigl(W^{1,\infty}(0,T;H)\cap H^1(0,T;V)\cap L^\infty(0,T;W)\cap C^0(\overline Q)
\bigr)\nonumber\\
&\times\bigl(H^1(0,T;H)\cap C^0([0,T];V)\cap L^\infty(0,T;W)\cap \Liq\bigr)\,,
\end{align}
and the optimal control problem {\bf (CP$_\alpha$)} is well defined for
$\alpha\in (0,1]$.
\Erem
Next, we turn our \pier{attention} to the limiting case $\alpha=0$ in which the first term in
\eqref{var:1} and the first two initial conditions in \eqref{var:4} are not present. As \pier{it} has
been shown in\pier{\cite{CRS1} (see, in particular, \cite[Remarks~4.2 and~4.4]{CRS1})}, there exists a unique quadruple $(\mu_0,\ph_0,
\sigma_0,\xi_0)$ with the regularity
\begin{align}
\label{regmuz}
&\mu_0\in L^\infty(0,T;W),\\
\label{regphiz}
&\ph_0\in W^{1,\infty}(0,T;H)\cap H^1(0,T;V)\cap L^\infty(0,T;W)\cap C^0(\overline Q),\\
\label{regsigmaz}
&\sigma_0\in H^1(0,T;H)\cap C^0([0,T];V)\cap L^2(0,T;W)\cap \Liq,\\
\label{xiz}
&\xi_0\in \Liq,
\end{align}
which satisfies the system \eqref{var:1}--\eqref{var:4}. 
\pier{We emphasize that our notation for the solution should not be confused with that used for the initial data (cf.~\eqref{ss5} and~\eqref{var:4}), where the symbol $0$ appears as a superscript.}
Notice that the condition $\xi_0
\in\partial F_1(\ph_0)$ implies that $r_-<\ph_0(x,t)<r_+$ and thus $\xi_0(x,t)=F_1'(\ph_0
(x,t))$ for a.e. $(x,t)\in Q$. Therefore, the phase equation \eqref{var:2} is, also in the
case $\alpha=0$, satisfied in the strong form \eqref{var:2neu}.  
In addition, from
\eqref{regmuz}--\eqref{regsigmaz} and \ref{F:weak} it follows that  $-F_2'(\ph_0)+
\mu_0+\chi\sigma_0\in\Liq$. A standard argument using the blow-up condition
\eqref{singular} then shows that there exist constants $r_-<\bar\rho_0<\hat\rho_0<r_+$ 
such that
\begin{equation}
\label{separao}
\bar\rho_0\le \ph_0(x,t)\le \hat\rho_0  \quad\mbox{for all }\,(x,t)\in\overline Q.
\end{equation} 
That is, also in the case $\alpha=0$ a uniform separation condition is satisfied. 

Moreover, we may introduce the control-to-state operator
\Beq
\label{defSz}
\S_0:L^\infty(0,T;H)\ni u\mapsto \S_0(u)=(\S_0^1(u),\S_0^2(u),\S_0^3(u),\S_0^4(u))
:=(\mu_0, \ph_0,\sigma_0,\xi_0 )\,,
\Eeq
and also the control problem {\bf (CP$_0$)} is well defined. \pier{Of 
course, in this case the solution space differs in the first component and 
and includes an additional fourth component.}

We are now going to prove the following approximation result.
\Bthm
Let the assumptions \ref{const:weak}--\ref{h:weak}, \eqref{initialreg} and \eqref{initialsep} 
 be satisfied. Then there is a constant $K_2>0$,
which depends on the data of the system, but neither on $u\in\uad$ nor on 
$\alpha\in (0,1]$, such that the following holds true: whenever $u\in\uad$ and 
$\alpha\in (0,1]$ are given and $(\mual,\phial,\sial)=\S_\alpha(u)$, then it holds the estimate
\begin{align}
\label{boundal} 
&\alpha\|\mual\|_{W^{2,\infty}(0,T;V^*)} +\sqrt{\alpha}\|\mual\|_{W^{1,\infty}(0,T;H)}
+\|\mual\|_{L^\infty(0,T;V)}\nonumber\\
&+\|\phial\|_{W^{1,\infty}(0,T;H)\cap H^1(0,T;V)\cap L^\infty(0,T;W)\cap C^0(\overline Q)}
\nonumber\\
&+\|\sial\|_{H^1(0,T;H)\cap C^0([0,T];V)\cap L^2(0,T;W)} 
+\|F_1'(\phial)\|_{L^\infty(0,T;H)}\,\le\,K_2\,.
\end{align}
Moreover, if  $\{\alpha_n\}\subset (0,1]$ and $\{u_{\alpha_n}\}\subset\uad$
are sequences such that $\alpha_n\searrow0$ and $u_{\alpha_n}\to \bar u$ weakly star
in $\Liq$ for some $\bar u\in\Liq$, then the following holds true:
the state variables $(\mu_{\alpha_n},\ph_{\alpha_n},\sigma_{\alpha_n})
:=\S_{\alpha_n}(u_{\alpha_n})$ enjoy for $n\to\infty$ the following convergence properties:
\begin{align}
\label{conmu}
&\mu_{\alpha_n} \to\mu_0 \quad \,\mbox{weakly star in }\,\L\infty V,\\
\label{conmut}
&\alpha_n\mu_{\alpha_n} \to 0 \,\,\,\, \mbox{weakly star in }\,\W{2,\infty}{V^*} \,  
\mbox{ and strongly in }\,\W{1,\infty}H,\\
\label{conphi}
&\ph_{\alpha_n}\to\ph_0 \quad \mbox{weakly star in } \,W^{1,\infty}(0,T;H)\cap H^1(0,T;V)\cap L^\infty(0,T;W)
\nonumber
\\
&\hspace*{20mm} \,
\mbox{ and strongly in }\,\C0V \cap C^0(\overline Q),\\
\label{conxi}
&F_1'(\vp_{\alpha_n})\to\xi_0 \quad \mbox{weakly star in } L^\infty(0,T;H),
\\
\label{consig}
&\sigma_{\alpha_n}\to\sigma_0 \,\quad \mbox{weakly star in } 
\H1 H \cap \L\infty V \cap \L2 {W},
\end{align}
where $(\mu_0,\ph_0,\sigma_0,\xi_0)=\S_0(\bar u)$.
\Ethm
\Bdim
The bound \eqref{boundal} has been proved in the proof of \cite[Thm.~4.1]{CRS1}
for fixed $u\in\uad$ and $\alpha\in (0,1]$. A closer inspection of the estimates performed
there reveals that the derived bounds are in fact uniform with respect to the choice
of $u\in\uad$ and $\alpha\in (0,1]$.  

Now suppose that sequences $\{\alpha_n\}\subset (0,1]$ and $\{u_{\alpha_n}\}
\subset\uad$ are given such that $\alpha_n\searrow0$ and $u_{\alpha_n}\to \bar u$ 
weakly star in $\Liq$. Then, clearly, $\bar u\in\uad$. Moreover, the state variables 
$(\mu_{\alpha_n},\ph_{\alpha_n},\sigma_{\alpha_n})=\S_{\alpha_n}(u_{\alpha_n})$ satisfy
the global bound \eqref{boundal}. Therefore we may without loss of generality assume
that there exists some quadruple $(\bar \mu,\bar \ph,\bar\sigma,\bar\xi)$ such that
the convergence properties \eqref{conmu}--\eqref{consig} are fulfilled \pier{for a subsequence of $\{\alpha_n\}$} with 
$(\mu_0,\ph_0,\sigma_0,\xi_0)$ replaced by $(\bar\mu,\bar\ph,\bar\sigma,\bar\xi)$.
It remains to show that $(\bar\mu,\bar\ph,\bar\sigma,\bar\xi)=\S_0(\bar u)$.
To this end, we first infer from \eqref{conphi}, using \ref{F:weak} and \ref{h:weak}, that
\Beq
F_2'(\ph_{\alpha_n}) \to F_2'(\bar\ph) \,\,\,\mbox{and}\,\,\, \h(\ph_{\alpha_n})\to \h(\bar\ph)\,\,\,\mbox{\pier{pointwise,} \ as}\,\,\, n\to\infty. \nonumber
\Eeq
Moreover, combining \eqref{conphi} and \eqref{conxi} with the maximal monotonicity
of the subdifferential  graph $\partial F_1$ in $L^2(0,T;H)$, we deduce from \cite[Prop.~2.2, 
p.~38]{Barbu} that $\,\bar\xi\in\partial F_1(\bar\ph)\,$ a.e. in $Q$. Hence, we may
pass to the limit as $n\to\infty$ in the system \eqref{var:1}--\eqref{var:4},
written for $\alpha=\alpha_n$ and the control $u=u_{\alpha_n}$, to arrive at
the conclusion that the quadruple $(\bar\mu,\bar\ph,\bar\sigma,\bar\xi)$ is a solution 
to the 
system \eqref{var:1}--\eqref{var:4} with $\alpha=0$ and the control
$u=\bar u$ (where the first two initial conditions in \eqref{var:4} are not 
present).  Therefore, \pier{it turns out that} $(\bar\mu,\bar\ph,\bar\sigma,\bar\xi)
=\S_0(\bar u)$, which concludes the proof of the assertion.
\Edim

\section{Existence and approximation of optimal controls}
\setcounter{equation}{0}
\label{OPT}
Beginning with this section, we study for $\alpha\in [0,1]$ the optimal control problem  {\bf (CP$_\alpha$)} of minimizing the cost functional \eqref{cost} subject to the control
constraint $u\in\uad$ and  the state 
system \Statesys, written in the form \eqref{var:1}--\eqref{var:4}, where in the case 
$\alpha=0$ the first two initial conditions in \eqref{var:4} are not present. In addition to
\ref{const:weak}--\ref{h:weak}, we impose the following general assumptions:
\pier{
\begin{enumerate}[label={\bf (A\arabic{*})}, ref={\bf (A\arabic{*})}, start=5]
\item \label{cbox}
	The conditions \eqref{uad} and \eqref{thresholds} are fulfilled.
\item \label{coeff}
     $b_1\ge 0$, $b_2\ge 0$, $b_3>0$, and $\kappa\ge 0$. 
\item \label{targets}
     $\widehat\ph_Q\in\Lzq$,  $\widehat\ph_\Omega \in V $, \pier{and $\G:L^2(Q)\to [0,+\infty) $ is a convex and continuous functional.}
\end{enumerate}}%
\vspace*{2mm}\noindent
We expect that the minimizers of the control problem {\bf (CP$_\alpha$)} are 
for $\alpha\searrow0$ closely related to those of {\bf (CP$_0$)}.
We have the following result.          
\Bprop
Suppose that \ref{const:weak}--\ref{targets}, \eqref{initialreg} and \eqref{initialsep} 
 are fulfilled, and let sequences $\{\alpha_n\}\subset
(0,1]$ and $\{u_{\alpha_n}\}\subset\Uad$ be given such that $\alpha_n\searrow0$ and $u_{\alpha_n}\to u$ weakly star in $\Liq$ for some $u\in\Uad$.
Then we have
\begin{align}
\label{uwe1}
\J(\S_0^2(u),u)\,&\le\,\liminf_{n\to\infty}\, \J({\cal S}_{\alpha_n}^2
(u_{\alpha_n}),u_{\alpha_n}),\\
\label{uwe2}
\J(\S_0^2(v),v)\,&=\,\lim_{n\to\infty} \, \J({\cal S}_{\alpha_n}^2(v),v) 
\quad\forall\,v\in\Uad.
\end{align}
\Eprop 

\begin{proof}  
According to Theorem 2.3, we have 
$\S_{\alpha_n}^i(u_{\alpha_n}) \to \S_0^i(u)$, for $i=1,2,3$,  in the sense of \eqref{conmu},\eqref{conphi} and \eqref{consig}, respectively. In particular,
$$
\ph_{\alpha_n}:=\S^2_{\alpha_n}(u_{\alpha_n})\to \S_0^2(u) =:\ph_0\quad\mbox{strongly in }\,C^0(\overline Q)\,.
$$ 
Now observe that  the third and fourth summands of the cost functional \eqref{cost} are weakly sequentially lower semicontinuous on $\Lzq$. 
We therefore conclude that
\begin{align}
\label{typisch}
&\liminf_{n\to\infty}\,\J(\S^2_{\alpha_n}(u_{\alpha_n}),u_{\alpha_n})\nonumber\\
&=\,\liminf_{n\to\infty} \Big(\frac{b_1}2\int_Q|\ph_{\alpha_n}-\widehat \ph_Q|^2
+\frac{b_2}2\iO |\ph_{\alpha_n}(T)-\widehat\ph_\Omega |^2+\frac{b_3}2\int_Q|
u_{\alpha_n}|^2+\kappa\G(u_{\alpha_n})\Big)\nonumber\\
&\ge\,\frac{b_1}2 \int_Q|\ph_0-\widehat\ph_Q|^2+\frac{b_2}2 \iO|\ph_0(T)-\widehat\ph
_\Omega|^2+\frac{b_3}2\int_Q|u|^2+\kappa\G(u)\nonumber\\
&=\,\J(\S_0^2(u),u)\,,
\end{align}
and the validity of \eqref{uwe1} follows.

Now let $v\in\Uad$ be 
arbitrarily chosen. Applying Theorem 2.3 to the constant sequence $v_n=v$, $n\in\enne$, we infer that 
$$\ph_{\alpha_n}:=\S_{\alpha_n}^2(v)\to \S_0^2(v)=:\ph_0 
\quad\mbox{strongly in }\,C^0(\overline Q).$$ 
From this and the fact that $v\in\uad$ is fixed, the validity of \eqref{uwe2} readily follows.
\end{proof}

Next, we discuss the existence of minimizers for the control problem {\bf (CP$_\alpha$)}. 
For $\alpha>0$ and the regular case $(r_-,r_+)=\erre$, this has been shown in \cite[Theorem 3.3]{CRS2}. Exactly the same proof can be employed under the more general assumption  \ref{F:weak}, since the uniform separation condition \eqref{separaal}
is satisfied for fixed $\alpha>0$.  For $\alpha=0$, we have the following result.
\Bcor
Suppose that \ref{const:weak}--\ref{targets}, \eqref{initialreg} and \eqref{initialsep} are fulfilled. Then the problem {\bf (CP$_0$)}
admits at least one solution in $\Uad$.
\Ecor
\begin{proof}
We pick an arbitrary sequence $\{\alpha_n\}\subset (0,1]$ such that $\alpha_n\searrow0$ as $n\to\infty$. As stated above, the problem {\bf (CP$_{\alpha_n}$)} has for every $n\in\enne$
a solution 
$\,u_{\alpha_n}\in\Uad\,$ with associated state 
$(\ph_{\alpha_n},\mu_{\alpha_n},\sigma_{\alpha_n})=\S_{\alpha_n}(u_{\alpha_n})$.
 Since $\Uad$ is bounded in $\Liq$, we may without loss
of generality assume that $u_{\alpha_n}\to \bar u$ weakly star in $\Liq$ for some 
$\bar u\in\Uad$. 

Now, we  infer from Theorem 2.3 that $(\mu_{\alpha_n},\ph_{\alpha_n},
\sigma_{\alpha_n}, F_1'(\ph_{\alpha_n})\pier{)}$  converges in the sense of
\eqref{conmu}--\eqref{consig} to $(\mu_0,\ph_0,\sigma_0,\xi_0):=\S_0(\bar u)$. 
Using Proposition 3.1, we then obtain from the optimality of 
$((\mu_{\alpha_n},\ph_{\alpha_n},\sigma_{\alpha_n}),u_{\alpha_n})$ for {\bf (CP$_{\alpha_n}$)}, 
the following chain of (in)equalities for every $v\in\uad$:
\begin{align*}
&\J(\S_0^2(\bar u),\bar u)\,\le\,\liminf_{n\to\infty} \,\J({\cal S}_{\alpha_n}^2(u_{\alpha_n}),u_{\alpha_n})\,\le \, 	\liminf_{n\to\infty}\,\J({\cal S}^2_{\alpha_n}(v),v) \,=\,\J(\S_0^2 (v),v)\,.
\end{align*}
This shows that \betti{$(\S_0(\bar u),\bar u)$} is an optimal pair of the control problem {\bf (CP$_0$)},
 which concludes the proof 
of the assertion.
\end{proof}

The proof of Corollary~3.2 suggests that optimal controls of {\bf (CP$_\alpha$)}
are ``close'' to optimal
controls of {\bf (CP$_0$)} as $\alpha$ approaches zero. However, it does not yield any information on 
whether every optimal control of {\bf (CP$_0$)} can be approximated in this way. In fact, such a global result cannot be expected to hold true.
Nevertheless, a local answer can be given by employing a well-known trick. To this end, 
let $u^*\in\Uad$ be an optimal control for {\bf (CP$_0$)} with associated state
$(\mu^*,\ph^*,\sigma^*,\xi^*)=  \S_0(u^*)$. We  associate
with this optimal control the {\em adapted cost functional}
\begin{equation}
\label{adcost}
\widetilde{\cal J}(\vp,u):=
{\cal J}(\vp,u)\,+\,\frac 12\,\|u-u^*\|^2_{L^2(Q)}
\end{equation}
and a corresponding \emph{adapted optimal control problem} for $\alpha>0$, namely:

\vspace{2mm}\noindent
${\bf (ACP_\alpha)}$ \quad Minimize $\,\, 
\widetilde {\cal J}(\vp,u)\,\,$
for $\,u\in\Uad$\,  subject to the state system \eqref{var:1}--\eqref{var:4}.

\vspace{2mm}\noindent
A standard argument (see the proof of \cite[Thm.~3.3]{CRS2}), which needs no repetition here, \pier{shows} that the 
adapted optimal control problem 
{\bf (ACP$_\alpha$)} has for every $\alpha>0$ at least one solution.
The following result gives a partial answer to the question raised above
concerning the approximation of optimal controls for {\bf (CP$_0$)} by the approximating problems {\bf (ACP$_\alpha$)}.

\begin{theorem} 
Suppose that \ref{const:weak}--{\bf(A7)}, \eqref{initialreg} and \eqref{initialsep} are fulfilled, assume that
$u^*\in \Uad$ is an arbitrary optimal control of {\bf (CP$_0$)} with associated state  
$(\mu^*,\ph^*,\sigma^*,\xi^*)=\S_0(\us)$, and let $\,\{\alpha_n\}_{n\in\enne}\subset 
(0,1]\,$ be
any sequence such that $\,\alpha_n\searrow 0\,$ as $\,n\to\infty$. 
Then, for any $n\in \enne$, there exists 
an optimal control
 $\,u_{\alpha_n}\in \Uad\,$ of the adapted problem 
{\bf (ACP$_{\alpha_n}$)}
 with associated state $(\mu_{\alpha_n},\vp_{\alpha_n},\sigma_{\alpha_n})=\S_{\alpha_n}
 (u_{\alpha_n})$
 such that, as $n\to\infty$,
\begin{align}
\label{ustrong}
&u_{\alpha_n}\to u^* \quad\mbox{strongly in }\,L^2(Q),
\end{align}
and such that \eqref{conmu}--\eqref{consig} hold true with 
$(\mu_0,\vp_0,\sigma_0,\xi_0)$ replaced by $(\mus,\phis,\sis,\xis)$. 
 Moreover, we have 
\begin{align}
\label{convJad}
&\lim_{n\to\infty}\,\widetilde{{\cal J}}(\S^2_{\alpha_{n}}(u_{\alpha_{n}}),u_{\alpha_{n}})
\,=\,{\cal J}(\S_0^2(u^*),u^*).
\end{align}
\end{theorem}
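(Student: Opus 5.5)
The argument is the standard one for adapted cost functionals. For every $n$ we take any optimal control $u_{\alpha_n}\in\Uad$ of {\bf (ACP$_{\alpha_n}$)}; such controls exist by the remark preceding the theorem. We write $(\mu_{\alpha_n},\ph_{\alpha_n},\sigma_{\alpha_n})=\S_{\alpha_n}(u_{\alpha_n})$. Since $\Uad$ is bounded in $\Liq$, some subsequence (not relabeled) satisfies $u_{\alpha_n}\to\bar u$ weakly star in $\Liq$ for some $\bar u\in\Uad$. Theorem 2.3 then gives the convergences \eqref{conmu}--\eqref{consig} to $\S_0(\bar u)$. The aim is to show $\bar u=u^*$ together with strong $L^2$ convergence. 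Once that is done, the limit does not depend on the subsequence, so the whole sequence converges.

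For the key inequality, weak lower semicontinuity of $\frac12\|u-u^*\|^2_{L^2(Q)}$ combined with \eqref{uwe1} from Proposition 3.1 gives
\begin{align*}
\J(\S_0^2(\bar u),\bar u)+\tfrac12\|\bar u-u^*\|^2_{L^2(Q)}
&\le\liminf_{n\to\infty}\widetilde\J(\S^2_{\alpha_n}(u_{\alpha_n}),u_{\alpha_n})
\le\limsup_{n\to\infty}\widetilde\J(\S^2_{\alpha_n}(u_{\alpha_n}),u_{\alpha_n})\\
&\le\limsup_{n\to\infty}\widetilde\J(\S^2_{\alpha_n}(u^*),u^*)
=\J(\S_0^2(u^*),u^*),
\end{align*}
where the last inequality uses the optimality of $u_{\alpha_n}$ for {\bf (ACP$_{\alpha_n}$)} with competitor $u^*$, and the final equality is \eqref{uwe2} with $v=u^*$, the adaptation term vanishing. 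Optimality of $u^*$ for {\bf (CP$_0$)} gives $\J(\S_0^2(u^*),u^*)\le\J(\S_0^2(\bar u),\bar u)$. Hence $\|\bar u-u^*\|_{L^2(Q)}=0$, so $\bar u=u^*$. All the inequalities in the chain are then equalities, which yields \eqref{convJad}, and Theorem 2.3 gives \eqref{conmu}--\eqref{consig} with limit $(\mus,\phis,\sis,\xis)$.

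The main obstacle is the strong convergence \eqref{ustrong}, because only weak-star convergence comes for free. From the equalities we know that $\widetilde\J(\S^2_{\alpha_n}(u_{\alpha_n}),u_{\alpha_n})\to\J(\S_0^2(u^*),u^*)$. By the strong convergence $\ph_{\alpha_n}\to\phis$ in $C^0(\overline Q)$, the two tracking terms converge to their limits. Subtracting them leaves
\[
\lim_{n\to\infty}\Bigl(\tfrac{b_3}2\|u_{\alpha_n}\|^2_{L^2(Q)}+\kappa\G(u_{\alpha_n})+\tfrac12\|u_{\alpha_n}-u^*\|^2_{L^2(Q)}\Bigr)=\tfrac{b_3}2\|u^*\|^2_{L^2(Q)}+\kappa\G(u^*).
\]
Each of the first two terms is weakly lower semicontinuous, with liminf at least its value at $u^*$, and the third term is nonnegative. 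Therefore $\limsup_n\frac12\|u_{\alpha_n}-u^*\|^2_{L^2(Q)}\le0$, which is exactly \eqref{ustrong}. To make this rigorous one passes to subsequences realizing the limsup of the third term and then applies the liminf bounds to the other two terms.
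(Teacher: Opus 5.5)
Your proposal is correct and follows essentially the same route as the paper. You extract a weak-star convergent subsequence, use \eqref{uwe1}, \eqref{uwe2} and the optimality of $u^*$ and of $u_{\alpha_n}$ to force $\bar u=u^*$ and obtain \eqref{convJad}, isolate $\frac12\|u_{\alpha_n}-u^*\|^2_{L^2(Q)}$ to get \eqref{ustrong}, and pass to the whole sequence via the subsequence principle. The only cosmetic difference is in the strong-convergence step: the paper first proves $\J(\S^2_{\alpha_n}(u_{\alpha_n}),u_{\alpha_n})\to\J(\S_0^2(u^*),u^*)$ by a sandwich with \eqref{uwe1} and subtracts this from $\widetilde\J$, whereas you subtract the convergent tracking terms and apply weak lower semicontinuity of $\frac{b_3}2\|\cdot\|^2_{L^2(Q)}+\kappa\G$ directly, which amounts to the same thing.
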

\begin{proof}
For any $ k\in\enne$, we pick an optimal control
$u_{\alpha_k} \in \Uad\,$ for the adapted problem {\bf (ACP$_{\alpha_k}$)} and denote  
$(\mu_{\alpha_k},\vp_{\alpha_k},\sigma_{\alpha_k})=\S_{\alpha_k}(u_{\alpha_k})$ for
$k\in\enne$.
Then, by the boundedness of $\Uad$ in $\Liq$, there is some subsequence $\{\alpha_{n}\}$ 
of $\{\alpha_k\}$ such that
\begin{equation}
\label{ugam}
u_{\alpha_{n}}\to \bar u\quad\mbox{weakly star in }\,\Liq
\quad\mbox{as }\,n\to\infty,
\end{equation}
for some $\bar u\in\Uad$. 
Thanks to Theorem~2.3, the convergence properties \eqref{conmu}--\eqref{consig} hold true correspondingly, where $(\mu_0,\vp_0,\sigma_0,\xi_0)=\S_0(\bar u)$, 
and  the pair $(\S_0^2(\bar u),\bar u)=(\betti{\vp_0},\bar u)$
is admissible for {\bf (CP$_0$)}. 

We now aim at showing that $ \bar u=u^*$. 
Once this is shown, it follows from the unique solvability of the state system 
that also $(\mu_0,\vp_0,\sigma_0,\xi_0)=(\mus, \ph^*,\sis,\xis)$. 

Now observe that, owing to the weak sequential lower semicontinuity properties of 
$\widetilde {\cal J}$, 
and in view of the optimality property of $(\S_0^2(u^*),u^*)$ 
for problem {\bf (CP$_0$)},
\begin{align}
\label{tr3.6}
\liminf_{n\to\infty}\, \widetilde{\cal J}(\S^2_{\alpha_n}(u_{\alpha_n}),u_{\alpha_n})\,
&\ge \,{\cal J}(\S^2_0(\bar u),\bar u)\,+\,\frac{1}{2}\,
\|\bar u-u^*\|^2_{L^2(Q)}\nonumber\\[1mm]
&\geq \, {\cal J}(\S^2_0(u^*),u^*)\,+\,\frac{1}{2}\,\|\bar u-u^*\|^2_{L^2(Q)}\,.
\end{align}
On the other hand, the optimality property of  
$\,(\S^2_{\alpha_{n}}(u_{\alpha_n}),u_{\alpha_n})
\,$ for problem {\bf (ACP$_{\alpha_n}$)} yields that
for any $n\in\enne$ we have
\begin{equation}
\label{tr3.7}
\widetilde {\cal J}({\cal S}^2_{\alpha_{n}}(u_{\alpha_{n}}),
u_{\alpha_{n}})\,\le\,\widetilde {\cal J}({\cal S}^2_{\alpha_n}
(u^*),u^*)\,=\, {\cal J}({\cal S}^2_{\alpha_n}
(u^*),u^*)\,,
\end{equation}
whence, taking the limit superior as $n\to\infty$ on both sides and invoking (\ref{uwe2}) in
Proposition~3.1,
\begin{align}
\label{tr3.8}
	\nonumber
&\limsup_{n\to\infty}\,\widetilde {\cal J}(\S^2_{\alpha_n}(u_{\alpha_n}),
u_{\alpha_n})\,\le\,\limsup_{n\to\infty}\widetilde {\cal J}(\S^2_{\alpha_n}(u^*),u^*) 
\\ 
& =\,\limsup_{n\to\infty} {\cal J}(\S^2_{\alpha_n}(u^*),u^*)
\,=\,{\cal J}(\S_0^2(u^*),u^*)\,.
\end{align}
Combining (\ref{tr3.6}) with (\ref{tr3.8}), we have thus shown that 
$\,\frac{1}{2}\,\|\bar u-u^*\|^2_{L^2(Q)}=0$\,,
so that $\,\bar u=u^*\,$  and thus also $(\mu_0,\ph_0,\sigma_0,\xi_0)=
(\mus,\ph^*,\sis,\xis)$. 
Moreover, (\ref{tr3.6}) and (\ref{tr3.8}) also imply that
\begin{align*}
&{\cal J}(\S_0^2(u^*),u^*) \, =\,\widetilde{\cal J}(\S_0^2(u^*),u^*)
\,=\,\liminf_{n\to\infty}\, \widetilde{\cal J}(\S^2_{\alpha_n}(u_{\alpha_n}),
 u_{\alpha_{n}})\nonumber\\[1mm]
&\,=\,\limsup_{n\to\infty}\,\widetilde{\cal J}(\S^2_{\alpha_n}(u_{\alpha_n}),
{u_{\alpha_n}}) \,
=\,\lim_{n\to\infty}\, \widetilde{\cal J}(\S^2_{\alpha_n}(u_{\alpha_n}),
u_{\alpha_n})\,,
\end{align*}                                    
which proves the validity of {(\ref{convJad})}.
Moreover, the convergence properties 
\eqref{conmu}--\eqref{consig} are satisfied with $(\mu_0,\ph_0,\sigma_0,\xi_0)$ 
replaced by $(\mus,\ph^*,\sis,\xis)$. On the other hand, we have that 
\begin{align*}
{\cal J}(\S_0^2(u^*),u^*) 
\,&\leq\,\liminf_{n\to\infty}\, {\cal J}(\S^2_{\alpha_n}(u_{\alpha_n}),
 u_{\alpha_{n}})
 \,\leq\,\limsup_{n\to\infty}\, {\cal J}(\S^2_{\alpha_n}(u_{\alpha_n}),
 u_{\alpha_{n}}) \nonumber\\[1mm]
&\leq \,\limsup_{n\to\infty}\,\widetilde{\cal J}(\S^2_{\alpha_n}(u_{\alpha_n}),
{u_{\alpha_n}}) \,
=\,{\cal J}(\S_0^2(u^*),u^*) ,
\end{align*}        
so that also 
$ {\cal J}(\S^2_{\alpha_n}(u_{\alpha_n}),{u_{\alpha_n}}) \to  {\cal J}(\S_0^2(u^*),u^*)$ as 
$n\to \infty $, and the relation in \eqref{adcost} enables us to infer the strong convergence in \eqref{ustrong}
for the subsequence $\{u_{\alpha_n}\}$. 

We now claim that \eqref{ustrong} and \eqref{convJad} hold true even for the entire sequence, due to the complete identification of the limit
$\bar u$ as $u^*$. We only prove this claim for \eqref{ustrong}; for \eqref{convJad} a similar reasoning may be used.
Assume that \eqref{ustrong} were not true. Then there exist some $\varepsilon>0$ and a subsequence $\{\alpha_j\}$ of  $\{\alpha_k\}$ such that  
\begin{equation}
\label{sirtoby}
\|u_{\alpha_j}-u^*\|_{L^2(Q)}\,\ge\,\varepsilon \quad\forall\,j\in\enne.
\end{equation}
However, by the boundedness of $\Uad$, there is some subsequence $\{\alpha_{j_n}\}$ 
of $\{\alpha_j\}$ such that, with some $\tilde u\in\Uad$,
\begin{equation*}
u_{\alpha_{j_n}}\to \tilde u\quad\mbox{weakly star in }\,\Liq
\quad\mbox{as }\,n\to\infty\,.
\end{equation*}
Arguing as above, \pier{we deduce that $\tilde{u} = u^*$ and that \eqref{ustrong} also holds for the subsequence $\{u_{\alpha_{j_n}}\}$. This yields a contradiction, since \eqref{sirtoby} clearly implies that the sequence $\{u_{\alpha_j}\}$ cannot admit any subsequence converging} strongly to $\,u^*\,$ in $L^2(Q)$.
\end{proof}

\section{Convergence of the adjoint state variables}
\setcounter{equation}{0}
\label{ADJ} 
Assume that the general \betti{hypotheses}  \ref{const:weak}--\ref{targets}, \eqref{initialreg} and
\eqref{initialsep} are fulfilled. We study in this section the adjoint \betti{problem} associated with the state system~\Statesys.
To this end, let $\alpha\in(0,1]$ be arbitrary and $u\in\uad$ be any admissible control with associated state $(\mual,\phial,\sial)=\S_\alpha(u)$. Then, according to 
\cite[Sect.~3~and~Sect.~4, Scenario 1]{CRS2}, the adjoint state system for the optimal control problem {\bf (CP$_\alpha$)} with the cost functional \eqref{cost} and the adjoint variables $(p,q,r)$ \betti{has} 
the form
\begin{align}
\label{as1}
&\alpha\dtt p-\Delta p = q-P_0(p-r)  \quad\mbox{a.e. in }\,Q,
\\[1mm]
\label{as2}
&-\dt(\tau q +p)-\Delta q=-\chi\Delta r-F''(\phial)q-\chi P_0(p-r)
-\h'(\phial)\,z\,p
\nonumber\\
& \hspace*{39mm}+ b_1(\phial-\widehat\ph_Q) \quad\mbox{a.e. in }\,Q,\\[1mm]
\label{as3}
&-\dt r-\Delta r=\chi q+P_0(p-r)\quad\mbox{a.e. in }\,Q,\\[1mm]
\label{as4}
&\dn p=\dn q=\dn r=0 \quad\mbox{a.e. on }\,\Sigma,
\\
\label{as5}
&p(T)=0, \,\,\, \dt p(T)=0, \,\,\, (\tau q+p)(T)=b_2(\phial(T)-\pier{\widehat\ph_\Omega}), \,\,\,
r(T)=0\quad\mbox{a.e. in }\,\Omega.
\end{align}
We remark already at this point that  \eqref{as1}--\eqref{as5} is also the adjoint
system for the  adapted optimal control problem {\bf (ACP$_\alpha$)} in which the
cost functional  is given by \eqref{adcost} with some $\us\in\uad$.

Now, by virtue of \cite[Thm.~3.7]{CRS2}, the system \eqref{as1}--\eqref{as5}  enjoys 
for every $\alpha\in (0,1]$ a unique solution $(\pal,\qal,\ral)$ with the
regularity
\begin{align}
\label{regpal}
&\pal\in W^{2,\infty}(0,T;H)\cap W^{1,\infty}(0,T;V)\cap L^\infty(0,T;W)\cap 
C^0(\overline Q),\\
\label{regqal}
&\qal \in H^1(0,T;H)\cap C^0([0,T];V) \cap L^2(0,T;W),\\
\label{regral}
&\ral\in  H^1(0,T;H)\cap C^0([0,T];V)\cap L^2(0,T;W)\cap \Liq.
\end{align}

Next, we derive estimates for the adjoint variables $(\pal,\qal,\ral)$ that are \pier{independent of} $\alpha\in (0,1]$. We have the following result.
\Blem
Suppose that \ref{const:weak}--\ref{targets}, \eqref{initialreg} and \eqref{initialsep} are satisfied. Then
there exists a constant $K_3>0$, which  depends on the data of the system, but not on
$\alpha\in (0,1]$, such that the following holds true: whenever $u\in\uad$ 
and $\alpha\in (0,1]$ are given and
$(\pal,\qal,\ral)$ is the unique solution to the system \eqref{as1}--\eqref{as5} 
with $(\mual,\phial,\sial)=\S_{\alpha}(u)$, then 
\begin{align}
\label{boundadj}
&\sqrt{\alpha}\|\pal\|_{W^{1,\infty}(0,T;\pier{H})}
+\|\pal\|_{L^\infty(0,T;V)}+\|\qal\|_{L^\infty(0,T;H)\cap L^2(0,T;V)} 
+\int_Q F_1''(\phial)|\qal|^2
\nonumber\\
&+\|\ral\|_{H^1(0,T;H)\cap C^0([0,T];V)\cap L^2(0,T;W)\cap\Liq} \,\le\,K_3.
\end{align} 
\Elem
\Bdim
In the following, we denote by $C$ any positive constant that may depend on the data of
the system but not on the choice of either $u\in\uad$ or $\alpha\in (0,1]$. We also introduce
the quantities
\Beq
\label{gal}
g_1^\alpha(u):=b_1(\phial-\widehat\ph_Q)\in L^2(0,T;H), \quad g_2^\alpha(u):=b_2(\phial(T)-
\widehat\ph_\Omega)\in V.
\Eeq
 We first note that the state variable $\phial$ satisfies the global bound \eqref{ssbound1},
 and thus
\Beq
\label{boundalg}
\|g_1^\alpha(u)\|_{L^2(0,T;H)} + \|g_2^\alpha(u)\|_V\,\le\,C.
\Eeq 
In addition, we can infer from \ref{F:weak} and \ref{h:weak} that
\Beq
\label{boundalh}
\|F_2''(\phial)\|_\Liq +\|\h'(\phial)\|_\Liq\,\le\,C.
\Eeq
Now we consider the identities \eqref{as1}--\eqref{as5} for $(\pal,\qal,\ral)$.
We multiply \eqref{as1} by $\,-\dt\pal$, \eqref{as2} by \,$\qal$, and \eqref{as3}
by $\,-K\dt\ral$  (where $K>0$ is yet to be chosen), add the three resulting equations
and integrate over $\,Q_t^T:=\Omega \times (t,T)$, where $t\in [0,T)$ is arbitrary.
Then, using the global bounds \eqref{boundalg} and \eqref{boundalh}, as well as \pier{\ref{P:weak} and} Young's
inequality, we find that
\begin{align}
\label{Omiabella}
&\frac \alpha 2\|\dt\pal(t)\|^2+\frac 12\iO|\nabla \pal(t)|^2 +\frac {P_0}2\|\pal(t)\|^2
+\frac \tau 2\|\qal(t)\|^2 +\frac K2\iO|\nabla \ral(t)|^2 \nonumber\\
&+\frac {K P_0}2\|\ral(t)\|^2 +\Qtt |\nabla\qal|^2 + K\Qtt|\dt\ral|^2
+\Qtt F_1''(\phial)|\qal|^2\nonumber\\
&=\,\frac {b_2^2}{2\tau}\|\phial(T)-\widehat\ph_\Omega\|^2-P_0\Qtt \ral\dt \pal
+\chi\Qtt \nabla\qal\cdot\nabla\ral -\Qtt F_2''(\phial)|\qal|^2\
\nonumber\\
&\quad \,\,-\chi P_0\Qtt(\pal-\ral)\qal -\Qtt \h'(\phial)z\,\pal\qal
+b_1\Qtt (\phial-\widehat\ph_Q)\qal
\nonumber\\
&\quad \,\, -K\chi\Qtt \qal\dt\ral-K\pier{P_0}\Qtt \pal\dt\ral\nonumber\\
&\le\, C+C_K\Qtt\bigl(|\pal|^2+|\qal|^2+|\ral|^2\bigr)+\frac 12\Qtt|\nabla\qal|^2
+C\Qtt|\nabla\ral|^2
\nonumber\\
&\quad\,\,+\frac K4\Qtt|\dt\ral|^2-P_0\Qtt\ral\dt\pal\,.
\end{align}
Observe that the last term on the \lhs\ is nonnegative, since $F_1$ is convex.
Now, in view of the endpoint conditions \eqref{as5} and Young's inequality,
we have that
\begin{align}
\label{Napoli}
&-P_0\Qtt \ral\dt\pal\,=\,P_0\iO\ral(t)\pal(t)+P_0\Qtt \pal\dt\ral \nonumber\\
&\le\,\frac{P_0}4\|\pal(t)\|^2+P_0\|\ral(t)\|^2+\frac K4\Qtt|\dt\ral|^2
+C_K\Qtt|\pal|^2\,.
\end{align}
At this point, we choose $K=4$, \pier{use \eqref{Napoli} in the \rhs\ of \eqref{Omiabella},}
and rearrange the terms. It then turns out that Gronwall's lemma can be applied backward
in time, whence we conclude that
\begin{align}
&\sqrt{\alpha}\|\dt\pal\|_{L^\infty(0,T;H)} + \|\pal\|_{L^\infty(0,T;V)}
+\|\qal\|_{L^\infty(0,T;H)\cap L^2(0,T;V)}\nonumber\\
&+\|\ral\|_{H^1(0,T;H)\cap L^\infty(0,T;V)}+\int_Q F_1''(\phial)|\qal|^2\,\le\,C\,.
\nonumber
\end{align} 
Comparison in \eqref{as3} and standard \pier{elliptic} estimates then show that also
$$
\|\ral\|_{L^2(0,T;W)}\,\le\,C, 
$$
whence, by the continuity of the embedding of $H^1(0,T;H)\cap L^2(0,T;W)$ in
$C^0([0,T];V)$, also
\Beq
\|\ral\|_{C^0([0,T];V)}\,\le\,C \nonumber
\Eeq
follows. Finally, we notice that the \rhs\ of \eqref{as3} is already shown to be 
uniformly bounded in $L^\infty
(0,T;H)$. Moreover, $\ral$ satisfies zero terminal and Neumann boundary conditions.
It then follows from standard parabolic theory (cf. \cite[Thm.~7.1, ~p.181]{LSU}) that 
$$
\|\ral\|_\Liq \,\le\,C,
$$ 
which concludes the proof of the assertion.
\Edim

Next, we prove an approximation result for the adjoint state variables.
\Bthm
Suppose that \ref{const:weak}--\ref{targets}, \eqref{initialreg} and \eqref{initialsep} are satisfied, and 
let $\{\alpha_n\} \subset (0,1]$ and $\{u_{\alpha_n}\} \subset \uad$ be sequences
 such that $\alpha_n\searrow0$ and $u_{\alpha_n}\to u_0$ weakly star in $\Liq$ 
 for some $u_0\in\uad$. 
Then there exist a subsequence $\{\alpha_{n_k}\}$ of $\{\alpha_n\}$ and a triple
$(p_0,q_0,r_0)$ such that, as $k\to\infty$,
\begin{align}
\label{Conny1}
&\alpha_{n_k} p_{\alpha_{n_k}} \to 0  \quad\mbox{weakly star in \,$W^{2,\infty}(0,T;H)$\,and strongly in }\,W^{1,\infty}(0,T;V),\\  
\label{Conny2}
&p_{\alpha_{n_k}}\to p_0  \quad\mbox{weakly star in }\,L^\infty(0,T;W), \\
\label{Conny3}
&q_{\alpha_{n_k}} \to q_0  \quad\mbox{weakly star in }\,L^\infty(0,T;V)\cap L^2(0,T;W),\\
&\tau q_{\alpha_{n_k}}+p_{\alpha_{n_k}}\to \tau q_0+p_0 \quad\mbox{weakly in }\,H^1(0,T;H)
\nonumber
\\
&\hspace*{22.6mm} \pier{\mbox{and strongly in }\,C^0([0,T];H)\cap L^2(0,T;V),}
\label{Conny4}
\\
&r_{\alpha_{n_k}} \to r_0 \,\quad\mbox{weakly star in }\, H^1(0,T;H)\cap L^\infty([0,T];V)
\cap
L^2(0,T;W) \cap\Liq\nonumber\\
&\hspace*{22.6mm} \mbox{and strongly in }\,C^0([0,T];H)\cap L^2(0,T;V).
\label{Conny5}
\end{align}
Moreover,  with $\ph_0=\S_0^2(u_0)$, $(p_0,q_0,r_0)$ is a solution to the system
\begin{align}
\label{aso1}
&-\Delta p = q-P_0(p-r)  \quad\mbox{a.e. in }\,Q,
\\[1mm]
\label{aso2}
&-\dt(\tau q +p)-\Delta q=-\chi\Delta r-F''(\ph_0)q-\chi P_0(p-r)
-\h'(\ph_0)\,z\,p
\nonumber\\
& \hspace*{39mm}+ b_1(\ph_0-\widehat\ph_Q) \quad\mbox{a.e. in }\,Q,\\[1mm]
\label{aso3}
&-\dt r-\Delta r=\chi q+P_0(p-r)\quad\mbox{a.e. in }\,Q,\\[1mm]
\label{aso4}
&\,\dn p=\dn q=\dn r=0 \quad\mbox{a.e. on }\,\Sigma,
\\
\label{aso5}
& \,(\tau q+p)(T)=b_2(\ph_0(T)-\ph_\Omega), \,\,\,
r(T)=0,\quad\mbox{a.e. in }\,\Omega.
\end{align}

\Ethm

\begin{proof}
Let $\{\alpha_n\}\subset (0,1]$ and $\{u_{\alpha_n}\}\subset\uad$ be sequences such that
$\alpha_n\searrow0$ and $u_{\alpha_n}\to u_0$ weakly star in $\Liq$. 
According to Theorem 2.3, then $(\mu_{\alpha_n},\ph_{\alpha_n},\sigma_{\alpha_n})=\S_{\alpha_n}(u_{\alpha_n})$ and
$F_1'(\ph_{\alpha_n})$ converge to $(\mu_0,\ph_0,\sigma_0,\xi_0)=\S_0(u_0)$ in the sense
of \eqref{conmu}--\eqref{consig}, respectively. In particular, $\ph_{\alpha_n}\to \ph_0$ 
strongly in $C^0(\overline Q)$. Hence, by \ref{h:weak} and \ref{F:weak} \pier{we have that}
\Beq
\label{conhF2}
\h'(\ph_{\alpha_n})\to \h'(\ph_0), \quad F_2''(\ph_{\alpha_n})\to F_2''(\ph_0), \quad
\mbox{both strongly in }\,C^0(\overline Q).
\Eeq
At this point, we recall that $\ph_0$ satisfies the separation condition \eqref{separao}. 
Therefore, there exist a compact subinterval $M$ of $(r_-,r_+)$ and some $n_0\in\enne$ such 
that
\Beq
\label{unisep}
\ph_{\alpha_n}(x,t)\in M \quad\mbox{for all \,$(x,t)\in\overline Q$\, and }\,n\ge n_0.
\Eeq 
By \ref{F:weak}, $F_1\in C^3(M)$, so that also
\Beq
\label{conF1}
F_1''(\ph_{\alpha_n})\to F_1''(\ph_0) \quad \mbox{strongly in }\,C^0(\overline Q),
\Eeq
and there is some $C>0$ such that
\Beq
\label{boundF}
\|F_1''(\ph_{\alpha_n})\|_{C^0(\overline Q)} + \|F_2''(\ph_{\alpha_n})\|_{C^0(\overline Q)}\,\le\,C \quad\mbox{for all }\,n\in\enne.
\Eeq
Here, and in the remainder of this proof, we denote by $C$ positive constants that may
depend on the data but not on $n\in \enne$.

Now let, for $n\in\enne$, $(\paln,\qaln,\raln)$ be the adjoint variables associated with $u_{\alpha_n}$, which enjoy the regularity \eqref{regpal}--\eqref{regral} and fulfill the identities \eqref{as1}--\eqref{as5} for $\alpha=\alpha_n$. Then, owing to \pier{Lemma}~4.1,
the global estimate \eqref{boundadj} is satisfied for $\alpha=\alpha_n$, $n\in\enne$.

We are now going to derive stronger estimates, using, in particular, \eqref{boundF}. To this
end, we  test \eqref{as1} by $\,\dt\Delta \paln$ (this is only formal, but can be made 
rigorous on a suitable Faedo--Galerkin approximation) to find that
\begin{align}
\label{bodo1}
&\frac \alpha 2\|\nabla\dt\paln(t)\|^2+\frac12\|\Delta\paln(t)\|^2+\frac{P_0}2
\|\nabla\paln(t)\|^2\nonumber\\
&=\,-\Qtt \nabla\qaln\cdot\nabla\dt\paln +P_0 \Qtt \raln\dt\Delta\paln,  
\end{align}
where, owing to \eqref{boundadj}, the last summand on the \rhs, which we denote by
$I_1$, can in view of \eqref{boundadj} be estimated as follows:
\begin{align}
\label{bodo2}
I_1\,=\,&-P_0\iO \raln(t)\Delta\paln(t)-P_0\Qtt \dt\raln \Delta\paln \nonumber\\
\le\,&C+\frac 14\|\Delta\paln(t)\|^2+ C\Qtt |\Delta\paln|^2\,.
\end{align}

Next, we test \eqref{as2} by $\,-\Delta\qaln$, obtaining the identity
\begin{align}
\label{bodo3}
&\frac \tau 2 \|\nabla\qaln(t)\|^2-\Qtt\nabla\dt\paln\cdot\nabla\qaln +\Qtt |\Delta\qaln|^2
\nonumber\\
&=\frac{b_2^2}{2\tau}\iO |\nabla(\ph_{\alpha_n}(T)-\widehat\ph_\Omega)|^2
+\Qtt Z_{\alpha_n}\Delta\qaln,
\end{align}
where the first term on the \rhs\ is by \eqref{boundalg} known to be bounded and the 
expression 
\begin{align}
Z_{\alpha_n}:=&\chi\Delta\raln+F''(\ph_{\alpha_n})\qaln +\chi P_0(\paln-\raln)+
\h'(\ph_{\alpha_n})z\,\paln-b_1(\ph_{\alpha_n}-\widehat \ph_Q)  \nonumber
\end{align}
is, by virtue of \eqref{boundadj}, \eqref{boundalg}, \eqref{boundalh} and \eqref{boundF},
bounded in $\Lzq$. Hence, adding \eqref{bodo1} and \eqref{bodo3}, whereby two terms
cancel, taking \eqref{bodo2} into account, applying Young's inequality and Gronwall's
lemma backward in time, and using standard elliptic estimates, we finally obtain the estimate
\begin{align}
\label{bodo4}
&\sqrt{\alpha_n}\,\|\paln\|_{W^{1,\infty}(0,T;V)} +\|\paln\|_{L^\infty(0,T;W)}
+\|\qaln\|_{L^\infty(0,T;V)\cap L^2(0,T;W)} \,\le\,C\,.
\end{align}
Comparison in \eqref{as1} and in \eqref{as2}, respectively, then lead to the conclusion
that also
\Beq
\label{bodo5}
\alpha_n \|\paln\|_{W^{2,\infty}(0,T;H)} + \|\tau\qaln+\paln\|_{H^1(0,T;H)}\,\le\,C\,.
\Eeq

From the estimates \eqref{boundadj}, \eqref{bodo4} and \eqref{bodo5} it follows the 
existence of a subsequence $\{\alpha_{n_k}\}$ of $\{\alpha_n\}$ and of a triple
$(p_0,q_0,r_0)$ such that the convergence properties \eqref{Conny1}--\eqref{Conny5}
hold true, where the strong \pier{convergence in~\eqref{Conny4} and~\eqref{Conny5}} 
follows from
well-known compactness results (see, e.g., \cite[Sect.~8, Cor.~4]{Simon}). Moreover, passage to the limit as \juerg{$k\to\infty$} in the equations 
\eqref{as1}--\eqref{as4}, written for $\alpha=\alpha_{n_k}$, readily yields that $(p_0,q_0,r_0)$
is a strong solution to the limiting system \eqref{aso1}--\eqref{aso4}. \pier{Note, in particular, that the terms with a product, i.e. $F''(\ph_{\alpha_{n_k}})q_{\alpha_{n_k}} $ and $
\h'(\ph_{\alpha_{n_k}})z\,p_{\alpha_{n_k}}$, weakly converge due to the strong convergence in \eqref{conhF2}.}
Finally, we obviously
have that $r_0(T)=0$, and we \pier{deduce 
from \eqref{Conny4}}
that \,$(\tau q_0+p_0)(T)=b_2(\ph_0(T)-\widehat\ph_0)$, \pier{as well.}
In conclusion, also \eqref{aso5} is valid. The assertion is thus completely proved.
\end{proof}

\Brem
Notice that we have implicitly shown that the limiting system \eqref{aso1}--\eqref{aso5}
has a solution $(p_0,q_0,r_0)$ with the regularity
\begin{align}
\label{regpo}
&p_0\in L^\infty(0,T;W), \quad q_0\in L^\infty(0,T;V)\cap L^2(0,T;W), \quad
\tau q_0+p_0\in H^1(0,T;H),\nonumber\\
&r_0\in H^1(0,T;H)\cap C^0([0,T];V)\cap L^2(0,T;W)\cap\Liq\,.
\end{align}
We are now going to show that this solution is unique, which then entails that the
convergence properties \eqref{Conny1}--\eqref{Conny5} hold true for the entire 
sequences and not just for suitable subsequences. To this end, let $(p_i,q_i,r_i)$, $i=1,2$,
be two such solutions. Then $(p,q,r):=(p_1-p_2,q_1-q_2,r_1-r_2)$ solves the homogeneous
system \eqref{aso1}--\eqref{aso5}, in which the terms $b_1(\ph_0-\widehat\ph_Q)$ and
$b_2(\ph_0(T)-\widehat\ph_\Omega)$ are replaced by zero.

Let $t\in[0,T)$ be arbitrary but fixed. We multiply \eqref{aso1}, written for $t$,  by $p(t)$\pier{,} integrate over $\Omega$ and apply Young's inequality appropriately to obtain that
\begin{align}
\label{oma1}
&\|\nabla p(t)\|^2 + P_0\|p(t)\|^2=\iO q(t)p(t) + P_0\iO r(t)p(t)
\nonumber\\
& \pier{\juerg{=\,}\frac1\tau \iO ( \tau q+  p) (t) p(t) -\frac1\tau \|p(t)\|^2
+ P_0\iO r(t)p(t)}
\nonumber\\
&\le\,\frac{P_0}2\|p(t)\|^2 \pier{{}+ \frac 1{\tau^2 P_0}\|(\tau q + p)(t)\|^2} + P_0\|r(t)\|^2\,.
\end{align} 
Next, we multiply \eqref{aso2} by $K(\tau q+p)$, where $K>0$ is yet to be chosen,
 and integrate over \juerg{$Q^T_t$}. It follows,
using integration by parts and Young's inequality,
\begin{align}
\label{oma2}
&\pier{\frac{K}2\|(\tau q + p)(t)\|^2}+K\tau \Qtt|\nabla q|^2 \nonumber\\
&=\,-K\Qtt\nabla q\cdot\nabla p + K\chi\Qtt \nabla r\cdot \nabla(\tau q+p)
-K\Qtt F''(\ph_0)q(\tau q+p) \nonumber\\
&\quad\,-K\chi P_0\Qtt (p-r)(\tau q+p) -K\Qtt \h'(\ph_0)z\,p(\tau q +p) \nonumber\\
&\le\,\frac{K\tau}2\Qtt|\nabla q|^2+C_K\Qtt \bigl(|p|^2+\pier{|\tau q +p |^2}+|r|^2+|\nabla p|^2
+|\nabla r|^2\bigr)\,.
\end{align}
Finally, we multiply \eqref{aso3} by $ \,-4\dt r$ and integrate over \betti{$Q^T_t$} and by parts.
Young's inequality then yields that
\begin{align}
\label{oma3}
&4\Qtt |\dt r|^2 + 2\|\nabla r(t)\|^2+2P_0\|r(t)\|^2 \nonumber\\
&=\,- \pier{\frac{4\chi}\tau \Qtt (\tau q +p)} \dt r
-4 \pier{\Bigl(P_0 - \frac{\chi}\tau\Bigr)}\Qtt  p\dt r \nonumber\\
&\le\,\Qtt|\dt r|^2 +C\Qtt\bigl(\pier{|\tau q +p |^2+|p|^2} \bigr)\,.
\end{align}
At this point, we add the \pier{inequalities} \eqref{oma1}--\eqref{oma3} and
rearrange the terms. We also choose 
\juerg{$K>\frac 2{\tau^2 P_0}$ so that $\frac{K} 2>\frac 1{\tau^2P_0}$}. An application of Gronwall's
lemma backward in time then yields that $p=q=r=0$, i.e., we have the asserted uniqueness.
\Erem

\section{Convergence of the first-order necessary optimality conditions}
\setcounter{equation}{0}
\label{CONV}
In this section, we investigate the convergence properties of the first-order necessary 
optimality conditions. Again, we generally assume that the conditions 
\ref{const:weak}--\ref{targets}, \eqref{initialreg} and \eqref{initialsep} are fulfilled. 
Let $\alpha>0$ be given and $u_\alpha\in\uad$ be an optimal control for the control problem {\bf (CP$_\alpha$)} with associated state $(\mual,\phial,\sial)=
\S_{\alpha}(\ual)$ and adjoint state $(\pal,\qal,\ral)$. Then it 
follows from \cite[Thm.~4.1]{CRS2} that the variational
inequality 
\Beq
\label{VUG1}
\int_Q \bigl(\ral+\kappa\lambda_{\alpha} +b_3 u_\alpha\bigr)(u-u_\alpha)\ge 0 \quad
\mbox{for all }\,u\in\uad
\Eeq
is satisfied with some $\lambda_\alpha\in\partial\G(\ual)$. Now assume that $u_0\in\uad$
 is a fixed optimal control for {\bf (CP$_0$)} 
 with associated state $(\mu_0,\ph_0,\sigma_0)=\S_0(u_0)$
 and adjoint state $(p_0,q_0,r_0)$, where the latter is the 
(by Remark 4.3 unique) solution to \eqref{aso1}--\eqref{aso5} having the regularity
properties  \eqref{regpo}. We expect that then, with a suitable 
 $\lambda_0\in \partial \G(u_0)$, it holds the  variational inequality
 \Beq
 \label{VUG2}
 \int_Q \bigl(r_0+\kappa\lambda_0 +b_3 u_0\bigr) (u-u_0)\betti{{}\geq 0} \quad\mbox{for all }\,u\in\uad.
 \Eeq
We are going to show that this is actually the case. Unfortunately, we cannot simply pass
to the limit as $\alpha\searrow0$ in \eqref{VUG1} in order to establish the validity of 
\eqref{VUG2}, since the convergence properties of $\{u_\alpha\}_{\alpha>0}$ are too weak.
Instead, we argue with the optimal control problem {\bf (ACP$_\alpha$)} induced by
the adapted cost functional \eqref{adcost} associated with $\us=u_0$. 
In this connection, we note that it follows
from the analysis in Section 3 of \cite{CRS2} that the 
adjoint state systems (and thus also the uniquely determined adjoint states) 
for the  minimization problems  {\bf (CP$_\alpha$)} and
{\bf (ACP$_\alpha$)} are the same, while the first-order necessary optimality
conditions are different; namely, the variational inequality for the adapted 
problem takes the form 
\Beq
\label{VUG3}
\int_Q \bigl(\ral+\kappa\lambda_{\alpha} +b_3 u_\alpha +\ual-\juerg{u_0}\bigr)(u-u_\alpha)\ge 0, \quad
\mbox{for all }\,u\in\uad,
\Eeq
with some $\lambda_\alpha\in\partial\G(\ual)$ that may differ from the one appearing in
\eqref{VUG1}. We now choose any sequence $\{\alpha_n\} \subset (0,1]$ with $\alpha_n
\searrow0$. Then we can infer from Theorem 3.3  that for any $n\in\enne$ there exists an optimal control $u_{\alpha_n}\in\uad$ of the adapted
problem {\bf (ACP$_{\alpha_n}$)} with associated state $(\mualn,\phialn,\betti{\sigma_{\alpha_n}})
=\S_{\alpha_n}(u_{\alpha_n})$ such that, as $n\to\infty$,
\Beq
\label{ust}
u_{\alpha_n}\to u_0 \quad\mbox{strongly in }\,\Lzq,
\Eeq
and the convergence properties \eqref{conmu}--\eqref{consig} hold 
true. Moreover, Theorem 4.2 and Remark 4.3
imply that the corresponding adjoint states $(\paln,\qaln,\raln)$ 
converge in the sense of \eqref{Conny1}--\eqref{Conny5} to $(p_0,q_0,r_0)$.

At this point, \pier{recalling \ref{targets}} we notice that (see~\cite{Rock}) the subdifferential operator $\partial\G$ is maximal monotone \pier{with domain $ \Lzq$, and therefore continuous and locally bounded in $ \Lzq$.} Now let $\lambda_{\alpha_n}\in\partial\G(u
_{\alpha_n})$ satisfy \eqref{VUG3}, for all $n\in\enne$. Then the sequence $\{\lambda_{\alpha_n}\}$
is bounded in $\Lzq$, and we may without loss of generality assume that, as
$n\to\infty$,
$$
\lambda_{\alpha_n}\to \lambda_0 \quad \mbox{weakly in }\,\Lzq
$$
for some $\lambda_0\in\Lzq$. The demiclosedness of the maximal monotone operator
$\partial\G$ then shows that $\lambda_0\in\partial\G(u_0)$.

At this point, we are ready to pass to the limit as $n\to\infty$ in the variational
inequality \eqref{VUG3}, and the strong convergence \eqref{ust} entails that 
\eqref{VUG2} is in fact satisfied.

\Brem
It follows from a standard argument the validity of the pointwise projection formula \pier{holds:}
\Beq
\label{pointwise}
u_0= \max \left \{\underline{u}, \min \left\{\overline u,-\mbox{$\frac 1{b_3}$} 
(r_0+\kappa\lambda_0)\right\}\right\} \quad\mbox{a.e. in }\,Q.
\Eeq
As detailed in Section 5 of \cite{CRS2} for the case $\alpha>0$, this \pier{formula can be exploited to establish sparsity properties}
for the control problem {\bf (CP$_0$)}.
\Erem

\section*{Acknowledgments}
\pier{PC and ER gratefully mention their affiliation
to the GNAMPA (Gruppo Nazionale per l'Analisi Matematica, 
la Probabilit\`a e le loro Applicazioni) of INdAM (Isti\-tuto
Nazionale di Alta Matematica) and their collaboration
as Research Associate, to the IMATI -- C.N.R. Pavia, Italy.
The project also profited from visits of PC at the Weierstrass Institute in Berlin and of JS at the
Dipartimento di Matematica ``Felice Casorati'' in Pavia. The warm hospitality of these institutions 
is gratefully acknowledged.}


\end{document}